\UseRawInputEncoding
\documentclass[12pt]{article}
\usepackage[centertags]{amsmath}
\usepackage{amsfonts}
\usepackage{amssymb}
\usepackage{latexsym}
\usepackage{amsthm}
\usepackage{newlfont}
\usepackage{graphicx}
\usepackage{listings}
\usepackage{booktabs}
\usepackage{abstract}
\lstset{numbers=none,language=MATLAB}
\setcounter{page}{1}
\date{}

\bibliographystyle{amsplain}

\newlength{\defbaselineskip}
\setlength{\defbaselineskip}{\baselineskip}
\newcommand{\setlinespacing}[1]%
           {\setlength{\baselineskip}{#1 \defbaselineskip}}

\newcommand{\N}{{\mathbb{N}}}

\newcommand{\actaqed}{\hfill $\actabox$}
{\medskip\noindent \textit{Proof of #1. }}%
{\actaqed \medskip}

\def\D{{\mathcal D}}

\def \Tr{\mathcal T}

\def \cE{\mathcal E}

\def\R{{\mathbb R}}

\def \<{\langle}
\def\>{\rangle}

\def \ff{\varphi}
\def\al{\alpha}
\def\bt{\beta}
\def\ga{\gamma}

\def\bt{\beta}

\def \csp{\overline{\operatorname{span}}}

\newtheorem{Theorem}{Theorem}[section]
\newtheorem{Lemma}{Lemma}[section]

\newtheorem{Remark}{Remark}[section]

\numberwithin{equation}{section}

\newcommand{\be}{\begin{equation}}
\newcommand{\ee}{\end{equation}}

\begin{document}

\title{On the rate of convergence of greedy algorithms}
\author{  V.N. Temlyakov\thanks{ Steklov Mathematical Institute of Russian Academy of Sciences, Moscow, Russia; Lomonosov Moscow State University;  Moscow Center of Fundamental and Applied Mathematics;  University of South Carolina.}}
\maketitle
\begin{abstract}
{ We prove some results on the rate of convergence of greedy algorithms, which provide expansions. 
We consider both the case of Hilbert spaces and the more general case of Banach spaces. The new ingredient of the paper is that we bound the error of approximation by the product of both norms -- 
the norm of $f$ and the $A_1$-norm of $f$. Typically, only the $A_1$-norm of $f$ is used. In particular, we establish that some greedy algorithms (Pure Greedy Algorithm (PGA) and its generalizations) are as good as the Orthogonal Greedy Algorithm (OGA) in this new sense of the rate of convergence, while it is known that the PGA is much worth than the OGA in the standard sense. }
\end{abstract}

\section{Introduction}
\label{I}

Let us begin with a general description of the problem. Let $X$ be a Banach space with the norm $\|\cdot\|_X$ and $Y\subset X$ be a subspace of $X$ with a stronger norm $\|f\|_Y\ge \|f\|_X$, $f\in Y$.
Consider a homogeneous approximation operator (linear or nonlinear) $G:Y\to X$, $G(af) = aG(f)$, $f\in Y$, $a\in \R$, and the error of approximation 
$$
e(B_Y,G)_X := \sup_{f\in B_Y}\|f-G(f)\|_X,\quad B_Y:= \{f\,:\, \|f\|_Y \le 1\}. 
$$
Then for any $f\in Y$ we have
\be\label{Y}
\|f-G(f)\|_X \le e(B_Y,G)_X \|f\|_Y.
\ee
The characteristic $e(B_Y,G)_X$ plays an important role in approximation theory with many 
classical examples of spaces $X$ and $Y$, for instance, $X=L_p$ and $Y$ is one the smoothness spaces like Sobolev, Nikol'skii, or Besov space. 

In this paper we focus of the following version of the inequality (\ref{Y}): Find the best $\ga(\al,G,X,Y)$ such that the inequality 
\be\label{XY}
\|f-G(f)\|_X \le \ga(\al,G,X,Y)\|f\|_X^{1-\al} \|f\|_Y^\al,\quad \al \in [0,1],
\ee
holds for all $f\in Y$. Clearly, $\ga(1,G,X,Y)=e(B_Y,G)_X$. Also, it is clear that under assumption 
$\|f-G(f)\|_X \le \|f\|_X$, $f\in Y$, we obtain the trivial bound
$$
\ga(\al,G,X,Y) \le e(B_Y,G)_X^\al.
$$
In this paper we discuss greedy approximation with respect to a given dictionary and prove some 
nontrivial inequalities for $\ga(\al,G,X,Y)$ both in the case of $X$ being a Hilbert space and $X$ being a Banach space. In particular, we establish that some greedy algorithms (Pure Greedy Algorithm (PGA) and its generalizations) are as good as the Orthogonal Greedy Algorithm (OGA) in the sense of inequality (\ref{XY}), while it is known that the the PGA is much worth than the OGA in the sense of the inequality (\ref{Y}) (for definitions and precise formulations see below). 

Let $H$ be a real Hilbert space with the inner product $\<\cdot,\cdot\>$ and norm $\|\cdot\|$.  We say that a set of elements (functions) $\D$ from $H$ is a dictionary (symmetric dictionary) if each $g\in \D$ has norm   one ($\|g\|= 1$), and $\csp \D =H$. In addition we assume for convenience the property of symmetry:
$$
g\in \D \quad \text{implies} \quad -g \in \D.
$$

We define the Pure Greedy Algorithm (PGA). We describe this algorithm for a general dictionary $\D$. If $f\in H$,
we let $g(f)\in \D$ be an element from $\D$ which maximizes $\< f,g\>$. We assume for simplicity that such a maximizer exists; if not suitable modifications are necessary (see Weak Greedy Algorithm below) in the algorithm that follows. We define
$$
G(f,\D):= \<f,g(f)\>g(f)\quad \text{and}\quad R(f,\D) := f-G(f,\D).
$$
 
 {\bf Pure Greedy Algorithm (PGA).} We define $f_0:= f$ and $G_0(f,\D) := 0$. Then, for each $m\ge 1$, we inductively define
$$
G_m(f,\D):= G_{m-1}(f,\D) +G(f_{m-1},\D)
$$
$$
f_m:= f-G_m(f,\D) = R(f_{m-1},\D).
$$
Note that for a given element $f$ the sequence $\{G_m(f,\D)\}$ may not be unique. 

This algorithm is well studied from the point of view of convergence and rate of convergence. The reader 
can find the corresponding results and historical comments in \cite{VTbook}, Ch.2. In this paper we focus 
on the rate of convergence. Typically, in approximation theory we define the rate of convergence for specific classes. In classical approximation theory these are smoothness classes. Clearly, in the general 
setting with arbitrary $H$ and $\D$ we do not have a concept of smoothness similar to the classical 
smoothness of functions. It turns out that the geometrically defined class, namely, the closure of the convex hull of $\D$, which we denote by $A_1(\D)$, is a very natural class. For each $f\in H$ we associate the following norm
$$
\|f\|_{A_1(\D)} := \inf \{M:\, f/M\in A_1(\D)\}.
$$
Clearly, $\|f\|\le \|f\|_{A_1(\D)}$. Then the problem of the rate of convergence of the PGA can be formulated as follows (see \cite{VTbook}, p.95). Find the order of decay of the sequence
$$
\gamma_m(H):=\sup_{\{G_m(f,\D)\},f,\D} \frac{\|f-G_m(f,\D)\|}{ \|f\|_{A_1(\D)}},
$$
where the supremum is taken over all possible choices of $\{G_m(f,\D)\}$, over all elements
$f\in H$, $f\neq 0$, $\|f\|_{A_1(\D)}<\infty$, and over  all dictionaries $\D$. This problem is a central theoretical problem in greedy approximation in Hilbert spaces and it is still open. We mention some of known results here and refer the reader for the detailed history of the problem to \cite{VTbook}, Ch.2. 
It is clear that for any  $f\in H$, such that $\|f\|_{A_1(\D)}<\infty$ we have
$$
\|f-G_m(f,\D)\| \le \gamma_m(H) \|f\|_{A_1(\D)}.
$$
In this paper we discuss the following extension of the asymptotic characteristic $\gamma_m(H)$: For $\al \in (0,1]$ define
$$
\gamma_m(\al,H):=\sup_{\{G_m(f,\D)\},f,\D} \frac{\|f-G_m(f,\D)\|}{\|f\|^{1-\al} \|f\|_{A_1(\D)}^\al}.
$$
Clearly, 
\be\label{I1}
\gamma_m(1,H)= \gamma_m(H),\quad \gamma_m(\al,H)\ge \gamma_m(\bt,H)\quad\text{if}\quad \al\le \bt.
\ee
The first upper bound on $\ga_m(H)$ was obtained in \cite{DT} 
$$
\ga_m(H) \le m^{-1/6}.
$$
Actually, the proof in \cite{DT} (see also \cite{VTbook}, pp.92-93) gives
$$
\ga_m(1/3,H) \le m^{-1/6}.
$$
We establish here the following bounds 
\be\label{pg}
\frac{1}{2}m^{-\al/2} \le \ga_m(\al,H) \le m^{-\al/2}, \quad \al \le 1/3.
\ee
Also in Section \ref{H} we find the right behavior of the asymptotic characteristic similar to $\ga_m(\al,H)$
for a more general algorithm than the PGA, namely, for the Weak Greedy Algorithm with parameter $b$. 

It is interesting to compare the rates of convergence of the PGA and the Orthogonal Greedy Algorithm (OGA). We now give a brief definition of the OGA. We define $f_0^o := f$, $G^o_0(f,\D)=0$ and for 
$m\ge 1$ we inductively define $G^o_m(f,\D)$ to be the orthogonal projection of $f$ onto 
the span of $g(f^o_0)$, ..., $g(f^o_{m-1})$ and set $f^o_m := f-G_m^o(f,\D)$. The analogs of the characteristics $\ga_m(H)$ and $\ga_m(\al,H)$ for the OGA denote by $\ga^o_m(H)$ and $\ga^o_m(\al,H)$. The following bound is proved in \cite{DT} (see also \cite{VTbook}, p.93)
\be\label{I2}
\ga^o_m(H) \le m^{-1/2}.
\ee
It is known (see \cite{Liv}) that $\ga_m(H)$  decays slower than $m^{-0.1898}$. Therefore, from the point of view of the characteristics $\ga_m(H)$ and $\ga^o_m(H)$ the OGA is much better than the PGA. 
We establish here the following bounds 
\be\label{og}
\frac{1}{2}m^{-\al/2} \le \ga^o_m(\al,H) \le m^{-\al/2}, \quad \al \le 1.
\ee
This means that from the point of view of the characteristics $\ga_m(\al,H)$ and $\ga^o_m(\al,H)$ the OGA is the same as PGA for $\al\le 1/3$. It is a very surprising fact. 

We do not know if the upper bound in (\ref{pg}) holds for $\al>1/3$. However, the inequality in (\ref{I1}) and the lower bound for the $\ga_m(H)$ show that
$$
\gamma_m(\al,H)\ge \gamma_m(H) \ge cm^{-0.1898}.
$$
Therefore, the upper bound in (\ref{pg}) cannot be extended beyond $\al_0:= 0.3796$. 

Section \ref{B} deals with the case of a Banach space $X$. Results for the Banach space case are similar 
to those for Hilbert spaces but are not as sharp as their counterparts.

\section{Hilbert space.The Weak Greedy Algorithm with parameter $b$}
\label{H}

 Let a sequence $\tau = \{t_k\}_{k=1}^\infty$, $0\le t_k \le 1$ and a parameter $b\in (0,1]$ be given.  We define the Weak Greedy Algorithm with parameter $b$.  

{\bf Weak Greedy Algorithm with parameter $b$ (WGA($\tau,b$))} We define $f_0:=f_0^{\tau,b}:=f$. Then for each $m\ge 1$, we inductively define:

1) $\ff_m:=\varphi^{\tau,b}_m \in \D$ is any satisfying 
$$
\<f_{m-1},\varphi_m\> \ge t_m \sup_{g\in \D} \<f_{m-1},g\>;
$$

2) 
$$
f_m :=f_m^{\tau,b}:= f_{m-1} -b\<f_{m-1},\varphi_m\>\varphi_m;
$$

3)
$$
G_m(f,\D):=G^{\tau,b}_m(f,\D) := b\sum_{j=1}^m \<f_{j-1},\varphi_j\>\varphi_j.
$$
 In the case $t_k = t$, $k=1,2,\dots$, we write $t$ in the notation instead of $\tau$. 
 
 We proceed to the rate of convergence. The following Theorem \ref{T4.2} was proved in \cite{VT111}.

\begin{Theorem}\label{T4.2} Let $\D$ be an arbitrary dictionary in $H$. Assume $\tau :=\{t_k\}_{k=1}^\infty$ is a nonincreasing sequence and $b\in(0,1]$. Then for $f \in A_1(\D)$ we have
\be\label{4.1}
\|f -G^{\tau,b}_m(f,\D)\| \le e_m(\tau,b)
\ee
where
\be\label{4.1a}
e_m(\tau,b) := \left(1+b(2-b)\sum_{k=1}^m t^2_k\right)^{-\frac{(2-b)t_m}{2(2+(2-b)t_m)}}. 
\ee
\end{Theorem}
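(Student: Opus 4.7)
The plan is to follow the standard three-step template for weak greedy methods in a Hilbert space. Normalize so that $\|f\|_{A_1(\D)} \le 1$, abbreviate $a_m := \|f_m\|^2$, $S_m := \sup_{g \in \D}\<f_m, g\>$ and $c := b(2-b)$, and note that by the symmetry of $\D$ one may take $\<f_{j-1}, \ff_j\> \ge 0$ for every $j$. Squaring the update in step 2 of the WGA yields the energy identity $a_m = a_{m-1} - c\<f_{m-1}, \ff_m\>^2$, which combined with the weak selection rule in step 1 of the WGA gives $a_m \le a_{m-1} - ct_m^2 S_{m-1}^2$.

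Next one needs a lower bound on $S_{m-1}$. A Maurey-type argument---writing a near-optimal convex representation of $f_{m-1}/\|f_{m-1}\|_{A_1(\D)}$ and using the symmetry of $\D$---gives $a_{m-1} \le \|f_{m-1}\|_{A_1(\D)}\, S_{m-1}$. The triangle inequality for the $A_1$-norm applied to $f_{m-1} = f - G^{\tau,b}_{m-1}(f, \D)$ yields
\[
\|f_{m-1}\|_{A_1(\D)} \le 1 + b\sum_{j=1}^{m-1}\<f_{j-1}, \ff_j\> =: \rho_{m-1},
\]
so that $S_{m-1} \ge a_{m-1}/\rho_{m-1}$.

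These two ingredients produce the coupled system
\[
a_m \le a_{m-1}\Bigl(1 - \frac{ct_m^2\, a_{m-1}}{\rho_{m-1}^2}\Bigr), \qquad (\rho_m - \rho_{m-1})^2 = \frac{b}{2-b}(a_{m-1} - a_m),
\]
the second identity following from $b^2\<f_{m-1}, \ff_m\>^2 = (b/(2-b))(a_{m-1}-a_m)$. To match the specific form of (\ref{4.1a}), the natural object to track is a monomial $W_m := a_m^{\alpha_m}\rho_m^{\beta_m}$ with exponents $\alpha_m, \beta_m$ tuned to $\lambda_m = (2-b)t_m/(2(2+(2-b)t_m))$; an application of Bernoulli's inequality converts the multiplicative decrement of $a_m$ and additive increment of $\rho_m$ into an additive decrement of $W_m$ by roughly $ct_m^2$, which telescopes to the claimed bound.

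The main technical obstacle is this last step: discovering the correct pair of exponents $(\alpha_m, \beta_m)$ and running the induction when these exponents drift with $m$. This is where the monotonicity of $\tau$ enters: since $\lambda_m$ is an increasing function of $t_m$, nonincreasing $\{t_k\}$ forces $\lambda_{m-1} \ge \lambda_m$, so the previous-step bound with exponent $\lambda_{m-1}$ can be weakened to the same form with exponent $\lambda_m$ before performing the one-step update, closing the induction cleanly.
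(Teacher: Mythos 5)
Your first two steps are exactly right and match the paper's machinery: the energy identity $a_m = a_{m-1} - b(2-b)\<f_{m-1},\ff_m\>^2$, the Maurey-type lower bound $S_{m-1} \ge a_{m-1}/\|f_{m-1}\|_{A_1(\D)}$, and the $A_1$-norm triangle inequality yielding $\|f_{m-1}\|_{A_1(\D)} \le \rho_{m-1} := 1 + b\sum_{j<m}\<f_{j-1},\ff_j\>$. But the final step as you describe it does not close, and the gap is concrete.

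The coupled system you write down retains only the \emph{fully squared} form of the selection rule, $a_m \le a_{m-1}(1 - b(2-b)t_m^2 a_{m-1}/\rho_{m-1}^2)$, plus the identity $(\rho_m - \rho_{m-1})^2 = \frac{b}{2-b}(a_{m-1}-a_m)$. What is missing is the \emph{linear-in-increment} variant obtained by substituting $\<f_{m-1},\ff_m\>\ge t_m a_{m-1}/\rho_{m-1}$ into only one of the two factors of $\<f_{m-1},\ff_m\>^2$: writing $y_m := \<f_{m-1},\ff_m\>$, one gets $a_m \le a_{m-1}\bigl(1 - b(2-b)t_m\, y_m/\rho_{m-1}\bigr)$ alongside $\rho_m = \rho_{m-1}(1 + b\,y_m/\rho_{m-1})$. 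This pair, via the elementary inequality $(1-x)(1+x/a)^a \le 1$ with $a = (2-b)t_m$, yields $a_m\rho_m^{(2-b)t_m} \le a_{m-1}\rho_{m-1}^{(2-b)t_m}$, and monotonicity of $\tau$ enters precisely here: since $\rho_{m-1}\ge 1$ and $t_m\le t_{m-1}$, one has $a_{m-1}\rho_{m-1}^{(2-b)t_m}\le a_{m-1}\rho_{m-1}^{(2-b)t_{m-1}}$, so the product telescopes to $a_m\rho_m^{(2-b)t_m}\le a_0\rho_0^{(2-b)t_1}\le 1$. Separately, the quadratic recursion for $x_m := a_m\rho_m^{-2}$ feeds into the rational difference lemma to give $a_m\rho_m^{-2} \le \bigl(1+b(2-b)\sum_{k\le m}t_k^2\bigr)^{-1}$. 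The theorem then follows by multiplying $(a_m\rho_m^{-2})^{(2-b)t_m}$ and $(a_m\rho_m^{(2-b)t_m})^2$, which cancels $\rho_m$ and isolates $a_m^{(2-b)t_m+2}$.

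So the proof is not a single-monomial argument with an additive decrement of $W_m$ by $ct_m^2$; it is a pair of bounds, one rational/recursive and one multiplicative/monotone, derived from two structurally different uses of the selection rule (both factors replaced vs.\ one factor replaced). Your plan as stated cannot produce the second bound because you have already squared the selection rule everywhere, and the ``additive telescope'' heuristic does not reproduce the final inverse-power form. Recovering $y_m = (\rho_m-\rho_{m-1})/b$ from your identity and using it linearly would repair the argument.
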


Theorem \ref{T4.2} implies the following inequality for any $f$ and any $\D$
\be\label{4.1b}
\frac{\|f -G^{\tau,b}_m(f,\D)\|}{\|f\|_{A_1(\D)}} \le e_m(\tau,b) .
\ee
We now extend Theorem \ref{T4.2} to provide a bound for  
\be\label{4.1c}
 \ga_m^{t,b}(\al,H):= \sup_{\D}\sup_f\sup_{G^{t,b}_m(f,\D)}\frac{\|f -G^{t,b}_m(f,\D)\|}{\|f\|^{1-\al} \|f\|_{A_1(\D)}^\al}  .
\ee
We prove the following Theorem \ref{T4.2a} in the case $t_k=t$, $k=1,2,\dots$.

\begin{Theorem}\label{T4.2a} For any Hilbert space $H$   we have
\be\label{4.1e}
\ga_m^{t,b}(\al,H) \le (1+mb(2-b) t)^{-\al/2},   
\ee
provided $\al \le \frac{(2-b)t}{(2-b)t+2}$.
\end{Theorem}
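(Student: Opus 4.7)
The plan is to prove the estimate by induction on $m$, reworking the proof of Theorem~\ref{T4.2} while carrying the norm $\|f\|$ as a separate scale. Direct interpolation between the Theorem~\ref{T4.2} bound $\|f_m\|\le e_m(t,b)\|f\|_{A_1}$ and the monotonicity bound $\|f_m\|\le\|f\|$ does not suffice: it would give $\gamma_m^{t,b}(\alpha,H)\le e_m(t,b)^\alpha=(1+mb(2-b)t^2)^{-p\alpha}$, which is weaker than the claimed $(1+mb(2-b)t)^{-\alpha/2}$ for small $m$ (one checks $2pt<1$, so the first-order terms at $m=0$ go the wrong way). Hence the argument must revisit the greedy recursion, not invoke Theorem~\ref{T4.2} as a black box.

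Set $M:=\|f\|_{A_1}$, $s:=\|f\|$ with $0<s\le M$, and $y_m:=\|f_m\|^2$. I would prove by induction that $y_m\le (1+mb(2-b)t)^{-\alpha}s^{2(1-\alpha)}M^{2\alpha}$; the base case $m=0$ is $s^2\le s^{2(1-\alpha)}M^{2\alpha}$, which holds since $s\le M$. For the inductive step, combine the WGA identity $y_m=y_{m-1}-b(2-b)\langle f_{m-1},\varphi_m\rangle^2$ with the weak-greedy/duality estimate
\[
\langle f_{m-1},\varphi_m\rangle\ge t\sup_{g\in\D}\langle f_{m-1},g\rangle\ge \frac{t}{M}\langle f_{m-1},f\rangle,
\]
valid because $f/M\in A_1(\D)$ and $\D$ is symmetric. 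Using the auxiliary lower bound $\langle f_{m-1},f\rangle\ge\|f_{m-1}\|^2$ (the key technical ingredient, as in the proof of Theorem~\ref{T4.2} in \cite{VTbook}, Ch.~2) one obtains the scalar recursion $y_m\le y_{m-1}\bigl(1-b(2-b)t^2 y_{m-1}/M^2\bigr)$. Inverting in the standard way, $1/y_m\ge 1/y_{m-1}+b(2-b)t^2/M^2$, and summing from $y_0=s^2$ yields the intermediate bound
\[
y_m\le\frac{s^2M^2}{M^2+mb(2-b)t^2 s^2}.
\]

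To convert this intermediate bound into the form (\ref{4.1e}), write $r:=s/M\in(0,1]$ and $B:=mb(2-b)t$; the target reduces to the scalar inequality
\[
\bigl[r^2(1+B)\bigr]^\alpha\le 1+Btr^2.
\]
Setting $z:=r^2(1+B)\in[0,1+B]$, Bernoulli's inequality $(1+x)^\alpha\le 1+\alpha x$ applied to $x=z-1\ge -1$ gives $z^\alpha\le 1+\alpha(z-1)$. For $z\ge 1$ one checks $\alpha(z-1)/z\le \alpha B/(1+B)\le Bt/(1+B)$ using $z\le 1+B$ and the hypothesis $\alpha\le t$; for $z<1$ the inequality is trivial. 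Since $\alpha_0=(2-b)t/((2-b)t+2)\le t$, the hypothesis $\alpha\le\alpha_0$ in the theorem is amply sufficient for this last step.

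The main obstacle is the auxiliary inequality $\langle f_m,f\rangle\ge\|f_m\|^2$ for the WGA with general parameter $b\in(0,1]$. For the OGA it follows immediately from orthogonality of $f_m$ to the previous approximations; for WGA it requires a careful induction based on the identity
\[
\langle f_m,f\rangle-\|f_m\|^2=\bigl[\langle f_{m-1},f\rangle-\|f_{m-1}\|^2\bigr]+b\langle f_{m-1},\varphi_m\rangle\bigl[(2-b)\langle f_{m-1},\varphi_m\rangle-\langle\varphi_m,f\rangle\bigr],
\]
controlling the cross-term using the symmetry of $\D$ and the accumulated surplus from earlier steps. This is the technical heart of the proof and the reason why a clean interpolation does not work.
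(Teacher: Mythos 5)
Your preliminary observation is right: interpolating between Theorem~\ref{T4.2} and $\|f_m\|\le\|f\|$ is too weak, and any proof must reopen the greedy recursion. But the route you take rests on a false key step. The auxiliary inequality $\langle f_{m-1},f\rangle\ge\|f_{m-1}\|^2$ is \emph{not} true for the PGA/WGA with general dictionaries. A two-dimensional counterexample: take $H=\R^2$, $\D=\{\pm e_1,\pm\psi\}$ with $\psi=(e_1+\sqrt3\,e_2)/2$, and $f=2e_1+e_2$. Running the PGA ($t=b=1$) gives $f_1=e_2$ and $f_2=\tfrac14 e_2-\tfrac{\sqrt3}{4}e_1$, for which $\langle f_2,f\rangle=(1-2\sqrt3)/4<0<\tfrac14=\|f_2\|^2$. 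In fact you can see without any example that your proposed intermediate bound $\|f_m\|^2\le s^2M^2/(M^2+mb(2-b)t^2s^2)$ cannot possibly hold: setting $s=M$ (that is, $f\in A_1(\D)$) it would give $\|f_m\|\le(1+mb(2-b)t^2)^{-1/2}\|f\|_{A_1(\D)}$, i.e.\ $\gamma_m(H)=O(m^{-1/2})$ for the PGA, contradicting the Livshitz lower bound $\gamma_m(H)\ge cm^{-0.1898}$ quoted in Section~\ref{I}. The error is precisely that $\|f_{m-1}\|_{A_1(\D)}$ need not stay bounded by $M=\|f\|_{A_1(\D)}$, so the duality lemma only gives $\sup_{g\in\D}\langle f_{m-1},g\rangle\ge\|f_{m-1}\|^2/\|f_{m-1}\|_{A_1(\D)}$, with a denominator that can grow.

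This is exactly the obstacle the paper's proof is built around: it introduces $B_m:=\|f\|_{A_1(\D)}+b\sum_{j\le m}y_j$ as a monotone upper bound for $\|f_m\|_{A_1(\D)}$, runs \emph{two} coupled recursions — one giving $(\ref{4.6})$, $a_mB_m^{-2}\le(1+mb(2-b)t^2)^{-1}$, via Lemma~\ref{HL1}, and one giving $(\ref{4.11})$, $a_mB_m^{(2-b)t}\le\|f\|^2\|f\|_{A_1(\D)}^{(2-b)t}$, via the elementary inequality $(1-x)(1+x/a)^a\le1$ — and eliminates $B_m$ between them. That elimination is what produces the critical exponent $\alpha_0=(2-b)t/((2-b)t+2)$; in your scheme $\alpha_0$ appears only as an accidental consequence of the Bernoulli step rather than being forced by the structure of the recursion. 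So the proposal does not merely need polishing: replacing your step by the true lemma $\sup_g\langle f_{m-1},g\rangle\ge\|f_{m-1}\|^2/B_{m-1}$ reintroduces the growing quantity $B_{m-1}$ and forces the two-recursion scheme of the paper.
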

\begin{proof}  The proof of this theorem goes along the lines of the proof of Theorem \ref{T4.2} in \cite{VT111}. Let $\D$ be a dictionary in $H$. We introduce some notations:
$$
f_k:= f^{t,b}_k,\quad \varphi_k:=\varphi^{t,b}_k, \quad k=0,1,\dots,
$$
$$
a_m := \|f_m\|^2, \quad y_m := 
\<f_{m-1},\varphi_m\>, \quad
m=1,2,\dots,  
$$
and consider the sequence $\{B_n\}$ defined as follows
$$
B_0 := \|f\|_{A_1(\D)},\quad B_m := B_{m-1} +by_m, \quad m=1,2,\dots .
$$
It is clear that $\|f_n\|_{A_1(\D)}\le B_n$, $n=0,1,\dots$. By Lemma 3.5 from [DT] (see also \cite{VTbook}, p.91, Lemma 2.17) we get
\be\label{4.2}
\sup_{g\in \D} \<f_{m-1},g\> \ge \|f_{m-1}\|^2/B_{m-1}.
\ee
From here and from the equality  
$$
\|f_m\|^2 = \|f_{m-1}\|^2 -b(2-b)\<f_{m-1},\varphi_m\>^2
$$
we obtain the following relations
\be\label{4.3}
a_m = a_{m-1} - b(2-b)y_m^2, 
\ee
\be\label{4.4}
B_m = B_{m-1} +by_m, 
\ee
\be\label{4.5}
y_m \ge ta_{m-1}/B_{m-1}. 
\ee
From (\ref{4.3}) and (\ref{4.5}) we obtain
$$
a_m \le a_{m-1} (1-b(2-b)t^2a_{m-1}B_{m-1}^{-2}).
$$ 
Using that $B_{m-1}\le B_m$, we derive from here
\be\label{aB}
a_mB_m^{-2} \le a_{m-1}B_{m-1}^{-2}(1-b(2-b)t^2a_{m-1}B_{m-1}^{-2}).
\ee
We shall need the
following simple known lemma (see, for example, \cite{VTbook}, p. 91, in case $C_1=C_2$). 

\begin{Lemma}\label{HL1} Let $\{x_m\}_{m=0}^\infty$
be a sequence of non-negative
 numbers satisfying the inequalities
$$
x_0 \le C_1, \quad x_{m+1} \le x_m(1 - x_mC_2) , \quad m = 0,1,2, \dots,\quad C_1,C_2>0 .
$$
Then we have for each $m$
$$
x_m \le (C_1^{-1}+C_2m)^{-1} .
$$
\end{Lemma}
\begin{proof} The proof is by induction on $m$. For $m = 0$ the statement
is true by assumption. We assume $x_m \le (C_1^{-1}+C_2m)^{-1}$ and prove that $x_{m+1} \le
(C_1^{-1}+C_2(m+1))^{-1}$. If $x_{m+1} = 0$ this statement is obvious. Assume therefore
that $x_{m+1} > 0$. Then we have 
$$
x_{m+1}^{-1} \ge x_m^{-1}(1 - x_mC_2)^{-1} \ge x_m^{-1}(1 + x_mC_2) =
x_m^{-1} + C_2 \ge C_1^{-1}+(m+1)C_2 ,
$$
which implies $x_{m+1} \le (C_1^{-1}+C_2(m+1))^{-1}$ . 
\end{proof}

We apply Lemma \ref{HL1} with $x_m:= a_mB_m^{-2}$. Then the inequality $\|f\| \le \|f\|_{A_1(\D)}$ implies that we can take $C_1=1$. We set $C_2= b(2-b)t^2$ and obtain from (\ref{aB}) and Lemma \ref{HL1}
\be\label{4.6}
a_mB_m^{-2} \le (1+ mb(2-b)t^2)^{-1}. 
\ee
Relations (\ref{4.3}) and (\ref{4.5}) imply
\be\label{4.7}
a_m \le a_{m-1}-b(2-b)y_mta_{m-1}/B_{m-1} = a_{m-1}(1-b(2-b)ty_m/B_{m-1}). 
\ee
We now need the following simple inequality: For any $x<1$ and any $a>0$ we have
\be\label{ineq}
(1-x)(1+x/a)^a \le 1.
\ee
Rewriting (\ref{4.4}) in the form
\be\label{4.9}
B_m = B_{m-1}(1+by_m/B_{m-1}) 
\ee
and using the inequality (\ref{ineq}) with $x=b(2-b)ty_m/B_{m-1}$ and $a=(2-b)t$ we get from (\ref{4.7}) and (\ref{4.9}) that
\be\label{4.11}
a_mB_{m}^{(2-b)t} \le a_{m-1}B_{m-1}^{(2-b)t}\le \dots \le \|f\|^2\|f\|_{A_1(\D)}^{(2-b)t}.
\ee
Combining (\ref{4.6}) and (\ref{4.11}) we obtain
$$
a_m^{(2-b)t+2} \le \|f\|^4\|f\|_{A_1(\D)}^{2(2-b)t} (1+mb(2-b)t^2)^{-(2-b)t},
$$
which completes the proof of Theorem \ref{T4.2a} with $\al_0 := \frac{(2-b)t}{(2-b)t+2}$.  The case $\al\le \al_0$ follows from Lemma \ref{HL2} below.
\end{proof}

{\bf Lower bounds.} Let $H$ be an infinite dimensional Hilbert space and $\{e_k\}_{k=1}^\infty$ be an 
orthonormal system in $H$. Suppose that our symmetric dictionary $\D$ consists of $\pm e_k$, $k=1,2,\dots$,
and other elements $g\in \D$ have the property $\<g,e_k\>=0$, $k=1,2,\dots$ . We present an example in the case $t=1$. Let $b\in (0,1]$ and $m$ be given. We consider two cases (I) $b\le 1/4$ and (II) $b\in (1/4,1]$.

{\bf (I).} Set $m':= [2bm]+1$ and 
$$
f=\sum_{k=1}^{m'} e_k.
$$
Then at each iteration the WGA($1,b$) will pick one of the $e_k$, $k\in [1,m']$, with the largest coefficient. 
After $m$ iterations we will get
$$
f_m = \sum_{k=1}^{m'}c_ke_k,\qquad c_k \ge 1-(m/m'+1)b \ge 1/4.
$$
Therefore, we obtain
$$
\|f_m\| \ge (m')^{1/2}/4,\quad \|f\| = (m')^{1/2},\quad \|f\|_{A_1(\D)} \le m'.
$$
Thus, for any $\al \in [0,1]$ we find
\be\label{H2}
\frac{\|f_m\|}{\|f\|^{1-\al}\|f\|_{A_1(\D)}^\al} \ge (m')^{-\al/2}/4.
\ee

{\bf (II).} Set
$$
f=\sum_{k=1}^{2m} e_k.
$$
Then
$$
f_m = \sum_{k=1}^{2m}c_ke_k, \quad c_k=1,\, k\in G, \quad c_k = 1-b,\, k\notin G,\quad |G|=m.
$$
Therefore, we obtain
$$
\|f_m\| \ge (m)^{1/2},\quad \|f\| = (2m)^{1/2},\quad \|f\|_{A_1(\D)} \le 2m.
$$
Thus, for any $\al \in [0,1]$ we find
\be\label{H3}
\frac{\|f_m\|}{\|f\|^{1-\al}\|f\|_{A_1(\D)}^\al} \ge 2^{-1/2}(2m)^{-\al/2}.
\ee

Bounds (\ref{H2}) and (\ref{H3}) show that in the case $t=1$ inequality (\ref{4.1e}) is sharp in the sense 
of dependence on $m$ and $b$, when $m$ goes to $\infty$ and $b$ goes to $0$. 

{\bf Proof of (\ref{og}).} The lower bound in (\ref{og}) follows from (\ref{H3}) because in the case of an 
orthonormal system the algorithms PGA and OGA coincide. We now prove the upper bound. 
\begin{Lemma}\label{HL2} Let $g_m(\al,H)$ denote either $\ga_m(\al,H)$ or $\ga^o_m(\al,H)$. Suppose 
that for some $\bt\in (0,1]$ we have
$$
g_m(\bt,H) \le C\ff(m)^{-\bt/2}.
$$
Then for any $\al \in (0,\bt)$ we have
\be\label{H4}
g_m(\al,H) \le C^{\al/\bt}\ff(m)^{-\al/2}.
\ee
\end{Lemma}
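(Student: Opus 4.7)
My plan is to interpolate between two bounds on $\|f-G_m(f,\D)\|$: the hypothesis $\|f-G_m(f,\D)\|\le C\varphi(m)^{-\beta/2}\|f\|^{1-\beta}\|f\|_{A_1(\D)}^{\beta}$, and the trivial monotonicity bound $\|f-G_m(f,\D)\|\le \|f\|$, which holds for both the PGA and the OGA (in the PGA case because $\|f_m\|^2=\|f_{m-1}\|^2-\<f_{m-1},\varphi_m\>^2\le \|f_{m-1}\|^2$, and in the OGA case because $f_m^o$ is a projection residual).

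Given $\alpha\in(0,\beta)$, I set $\theta:=\alpha/\beta\in(0,1)$ and write
\[
\|f-G_m(f,\D)\|=\|f-G_m(f,\D)\|^{1-\theta}\cdot\|f-G_m(f,\D)\|^{\theta}.
\]
Applying the trivial bound to the first factor and the hypothesis to the second gives
\[
\|f-G_m(f,\D)\|\le \|f\|^{1-\theta}\bigl(C\varphi(m)^{-\beta/2}\|f\|^{1-\beta}\|f\|_{A_1(\D)}^{\beta}\bigr)^{\theta}.
\]
A short exponent computation shows $(1-\theta)+\theta(1-\beta)=1-\alpha$ and $\theta\beta=\alpha$, so the right-hand side equals $C^{\alpha/\beta}\varphi(m)^{-\alpha/2}\|f\|^{1-\alpha}\|f\|_{A_1(\D)}^{\alpha}$. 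Dividing by $\|f\|^{1-\alpha}\|f\|_{A_1(\D)}^{\alpha}$ and taking the supremum over $f$, $\D$, and the choice of $G_m(f,\D)$ yields (\ref{H4}).

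There is no real obstacle here; the only point requiring a brief comment is the validity of $\|f-G_m(f,\D)\|\le \|f\|$ for \emph{each} of the two algorithms covered by the notation $g_m(\alpha,H)$, which follows at once from the non-increase of the residual norm in both the PGA and the OGA.
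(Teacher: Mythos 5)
Your proof is correct and follows essentially the same route as the paper's: both interpolate between the trivial monotonicity bound $\|f-G_m(f,\D)\|\le\|f\|$ and the hypothesis, with the interpolation parameter chosen as $\alpha/\beta$ (your $\theta$ is the paper's $a$). The only cosmetic difference is that you make the factorization $\|f-G_m\|=\|f-G_m\|^{1-\theta}\|f-G_m\|^{\theta}$ explicit before applying the two bounds, whereas the paper states the resulting inequality directly.
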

\begin{proof} By the definition of the algorithms PGA and OGA we have $\|f_m\|\le \|f\|$ and 
$\|f^o_m\|\le \|f\|$. For both algorithms PGA and OGA the proof is identical. We will carry it out for the PGA. Our assumption gives for any $f$, any dictionary $\D$, and any realization of $G_m(f,\D)$
$$
\|f_m\| = \|f-G_m(f,\D)\| \le \|f\|^{1-\bt}\|f\|_{A_1(\D)}^\bt C\ff(m)^{-\bt/2}.
$$
Therefore, for any $a\in[0,1]$ we have
\be\label{H5}
\|f_m\| \le \|f\|^{1-a}(\|f\|^{1-\bt}\|f\|_{A_1(\D)}^\bt C\ff(m)^{-\bt/2})^a.
\ee
Choosing $a=\al/\bt$, we obtain (\ref{H4}) from (\ref{H5}).
\end{proof}

The upper bound in (\ref{og}) follows from (\ref{I2}) and Lemma \ref{HL2}.
 
\section{Greedy expansions in Banach spaces} 
\label{B}

In this section we extend the results from Section \ref{H} to the case of a Banach space instead of a Hilbert space. We begin with some definitions. Let $X$ be a real Banach space with norm $\|\cdot\|$. As above we say that a set of elements (functions) $\D$ from $X$ is a dictionary (symmetric dictionary) if each $g\in \D$ has norm   one ($\|g\|= 1$), and $\csp \D =X$. In addition we assume for convenience that
the dictionary is symmetric
$$
g\in \D \quad \text{implies} \quad -g \in \D.
$$
We study in this paper   greedy algorithms with regard to $\D$ that provide greedy expansions. 
For a nonzero element $f\in X$ we denote by $F_f$ a norming (peak) functional for $f$: 
$$
\|F_f\| =1,\qquad F_f(f) =\|f\|.
$$
The existence of such a functional is guaranteed by Hahn-Banach theorem. 
Denote 
$$
r_\D(f) := \sup_{F_f}\sup_{g\in \D}F_f(g).
$$
We note that in general a norming functional $F_f$ is not unique. This is why we take $\sup_{F_f}$ over all norming functionals of $f$ in the definition of $r_\D(f)$. It is known that in the case of uniformly smooth Banach spaces (our primary object here) the norming functional $F_f$ is unique. In such a case we do not need $\sup_{F_f}$ in the definition of $r_\D(f)$.

 We consider here approximation in uniformly smooth Banach spaces. For a Banach space $X$ we define the modulus of smoothness
$$
\rho(u) := \rho(u,X):= \sup_{\|x\|=\|y\|=1}(\frac{1}{2}(\|x+uy\|+\|x-uy\|)-1).
$$
A uniformly smooth Banach space is  one with the property
$$
\lim_{u\to 0}\rho(u)/u =0.
$$
It is well known (see for instance \cite{DGDS}, Lemma B.1) that in the case $X=L_p$, 
$1\le p < \infty$ we have
\be\label{1.3}
\rho(u,L_p) \le \begin{cases} u^p/p & \text{if}\quad 1\le p\le 2 ,\\
(p-1)u^2/2 & \text{if}\quad 2\le p<\infty. \end{cases} 
\ee

  We now give a definition of the DGA$(\tau,b,\mu)$, $\tau =\{t_k\}_{k=1}^\infty$, $t_k \in (0,1]$ introduced in \cite{VT111} (see also \cite{VTbook}, Ch.6).  
  
{\bf Dual Greedy Algorithm with parameters $(\tau,b,\mu)$ (DGA$(\tau,b,\mu)$).}
Let $X$ be a uniformly smooth Banach space with the modulus of smoothness $\rho(u)$ and let $\mu(u)$ be a majorant of $\rho(u)$: $\rho(u)\le\mu(u)$, $u\in[0,\infty)$. For a sequence $\tau =\{t_k\}_{k=1}^\infty$, $t_k \in (0,1]$ and a parameter $b\in (0,1]$ we define sequences
$\{f_m\}_{m=0}^\infty$, $\{\ff_m\}_{m=1}^\infty$, $\{c_m\}_{m=1}^\infty$, and $\{G_m\}_{m=0}^\infty$ inductively. Let $f_0:=f$ and $G_0:=0$. If for $m\ge 1$ $f_{m-1}=0$ then we set $f_j=0$ for $j\ge m$ and stop. If $f_{m-1}\neq 0$ then we conduct the following three steps:

1) take any $\ff_m \in \D$ such that
\be\label{3.1}
F_{f_{m-1}}(\ff_m) \ge t_mr_\D(f_{m-1}); 
\ee

2) choose $c_m>0$ from the equation
\be\label{3.2}
\|f_{m-1}\|\mu(c_m/\|f_{m-1}\|) = \frac{t_mb}{2}c_mr_\D(f_{m-1}); 
\ee

3) define
\be\label{3.3}
f_m:=f_{m-1}-c_m\ff_m,\qquad G_m:=G_m^{\tau,b,\mu}:= G_{m-1}+c_m\ff_m. 
\ee

Along with the algorithm DGA$(\tau,b,\mu)$ we consider a slight modification of it, when at step 2) we find 
$c_m$ from the  equation (see \cite{VT111}, Remark 3.1)
\be\label{3.2a}
\|f_{m-1}\|\mu(c_m/\|f_{m-1}\|) = \frac{b}{2}c_mF_{f_{m-1}}(\ff_m). 
\ee
We denote this modification by DGA$(\tau,b,\mu)^*$.

We proceed to studying the rate of convergence of the DGA$(\tau,b,\mu)$ in the uniformly smooth Banach spaces with the power type majorant of the modulus of smoothness: $\rho(u)\le \mu(u)= \ga u^q$, $1<q\le 2$. The following Theorem \ref{T3.1} is from \cite{VT111} (see also \cite{VTbook}, p.372). 

\begin{Theorem}\label{T3.1} Let $\tau :=\{t_k\}_{k=1}^\infty$ be a nonincreasing sequence $1\ge t_1\ge t_2 \dots >0$ and $b\in (0,1)$. Assume that $X$ has a modulus of smoothness $\rho(u)\le \ga u^q$, $q\in (1,2]$. Denote $\mu(u) = \ga u^q$. Then for any dictionary $\D$ and any $f\in A_1(\D)$ the rate of convergence of the DGA$(\tau,b,\mu)$ is given by 
$$
\|f_m\|\le C(b,\ga,q)\left(1+\sum_{k=1}^mt_k^p\right)^{-\frac{t_m(1-b)}{p(1+t_m(1-b))}}, \quad p:= \frac{q}{q-1}.
$$
\end{Theorem}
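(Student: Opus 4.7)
The plan is to follow the Hilbert-space template of Theorem~\ref{T4.2a}: produce two scalar inequalities for $a_m:=\|f_m\|$ and an upper bound $B_m:=\|f\|_{A_1(\D)}+\sum_{j=1}^m c_j$ on $\|f_m\|_{A_1(\D)}$, then eliminate $B_m$. The one-step recurrence comes from the modulus of smoothness: applying the definition of $\rho$ to the unit vectors $f_{m-1}/\|f_{m-1}\|$ and $\ff_m$, and using $\|f_{m-1}+c_m\ff_m\|\ge F_{f_{m-1}}(f_{m-1}+c_m\ff_m)$, one obtains
$$
\|f_m\|\le \|f_{m-1}\|-c_mF_{f_{m-1}}(\ff_m)+2\|f_{m-1}\|\rho(c_m/\|f_{m-1}\|).
$$
Inserting the weak-greedy bound (\ref{3.1}), the majorant $\rho\le\mu$, and the defining equation (\ref{3.2}) collapses this to the Banach analogue of (\ref{4.3}),
$$
a_m\le a_{m-1}-t_m(1-b)c_m r_\D(f_{m-1}).
$$
Testing a peak functional of $f_{m-1}$ against an approximate convex combination representing $f_{m-1}/B_{m-1}$ then yields $r_\D(f_{m-1})\ge a_{m-1}/B_{m-1}$, mirroring (\ref{4.2}).

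Next, solving (\ref{3.2}) for $\mu(u)=\ga u^q$ gives $c_m=(t_mb/(2\ga))^{p-1}a_{m-1}\,r_\D(f_{m-1})^{p-1}$ with $p=q/(q-1)$, and combining with the lower bound on $r_\D(f_{m-1})$ produces the master iteration
$$
a_m\le a_{m-1}\bigl(1-t_m^p D\,(a_{m-1}/B_{m-1})^p\bigr),\qquad D:=(1-b)\bigl(b/(2\ga)\bigr)^{p-1}.
$$
Since $B_m\ge B_{m-1}$, the ratio $x_m:=a_m/B_m$ obeys $x_m\le x_{m-1}(1-t_m^p D x_{m-1}^p)$. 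Passing to $x_m^{-p}$ and invoking Bernoulli's inequality $(1-y)^{-p}\ge 1+py$ (the $p$-analogue of Lemma~\ref{HL1}) telescopes to the first bound
$$
a_m/B_m\le \Bigl(1+pD\sum_{k=1}^m t_k^p\Bigr)^{-1/p},
$$
since $x_0\le 1$ follows from $\|f\|\le\|f\|_{A_1(\D)}$.

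For the second bound, monotonicity of $\tau$ gives $a_j\le a_{j-1}\bigl(1-t_m(1-b)c_j/B_{j-1}\bigr)$ for every $j\le m$. Applying (\ref{ineq}) with $x=t_m(1-b)c_j/B_{j-1}$ and $a=t_m(1-b)$ and telescoping yields, in exact parallel with (\ref{4.11}),
$$
a_m B_m^{t_m(1-b)}\le \|f\|\cdot\|f\|_{A_1(\D)}^{t_m(1-b)}.
$$
Solving the first bound for $B_m$, substituting, and collecting powers of $a_m$ delivers
$$
a_m^{1+t_m(1-b)}\le \|f\|\cdot\|f\|_{A_1(\D)}^{t_m(1-b)}\Bigl(1+pD\sum_{k=1}^m t_k^p\Bigr)^{-t_m(1-b)/p};
$$
for $f\in A_1(\D)$ the prefactor is at most one, and the crude estimate $1+pD\sum_k t_k^p\ge\min(1,pD)(1+\sum_k t_k^p)$ absorbs $pD$ into the constant $C(b,\ga,q)$. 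The main obstacle is translating the Hilbert identity $y_m\ge t\,a_{m-1}/B_{m-1}$ into its Banach counterpart via (\ref{3.2}); this is where the exponent $p$ enters and where the failure of $(1-y)^p\le 1-py$ for $p>1$ forces the detour through $x_m^{-p}$ and Bernoulli's inequality in the derivation of the first bound.
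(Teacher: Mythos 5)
Your argument reproduces, step for step, the machinery the paper uses for Theorem~\ref{BT1} (the one-step modulus-of-smoothness recursion, the bound $r_\D(f_{m-1})\ge\|f_{m-1}\|/B_{m-1}$, solving (\ref{3.2}) for $c_m$, the two telescoping inequalities, and their combination), extended to nonincreasing $\tau$ by the observation $t_j\ge t_m$ for $j\le m$; your use of Bernoulli's inequality on $x_m^{-p}$ is just the proof of Lemma~\ref{HL1} unrolled to accommodate the varying coefficients $t_k^p$. The proposal is correct and follows essentially the same route as the paper.
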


\begin{Remark}\label{BR1} It is pointed out in \cite{VT111}, Remark 3.2, that Theorem \ref{T3.1} holds 
for the algorithm DGA$(\tau,b,\mu)^*$ as well.
\end{Remark}

Theorem \ref{T3.1} is an analog of Theorem \ref{T4.2}. We now prove an analog of Theorem \ref{T4.2a}. 
We extend Theorem \ref{T3.1} to provide a bound for  
\be\label{B1}
 \ga_m^{t,b,\mu}(\al,X):= \sup_{\D}\sup_f\sup_{G^{t,b,\mu}_m(f,\D)}\frac{\|f -G^{t,b,\mu}_m(f,\D)\|}{\|f\|^{1-\al} \|f\|_{A_1(\D)}^\al}  .
\ee
The corresponding characteristic for the algorithm DGA$(\tau,b,\mu)^*$ is denoted by $\ga_m^{t,b,\mu}(\al,X)^*$.
We prove the following Theorem \ref{BT1} in the case $t_k=t$, $k=1,2,\dots$.

\begin{Theorem}\label{BT1} For any Banach space $X$ with modulus of smoothness $\rho(u,X) \le \ga u^q$, $1<q\le 2$,  $p:= \frac{q}{q-1}$, we have
\be\label{B2}
\ga_m^{t,b,\mu}(\al,X) \le (1+mc t^p)^{-\al/p}, \quad  c := (1-b)\left(\frac{b}{2\ga}\right)^{\frac{1}{q-1}}, 
\ee
provided $\al \le \frac{t(1-b)}{1+t(1-b)}$. The same inequality holds for the $\ga_m^{t,b,\mu}(\al,X)^*$.
\end{Theorem}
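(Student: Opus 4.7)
The plan is to mirror the Hilbert-space proof of Theorem \ref{T4.2a}, with the square norm $\|f_m\|^2$ replaced by $\|f_m\|^p$ (where $p=q/(q-1)$ is the conjugate exponent coming from the modulus of smoothness) and with the key inner-product inequality $\sup_g\langle f,g\rangle\ge\|f\|^2/\|f\|_{A_1(\D)}$ replaced by its Banach-space counterpart $r_\D(f)\ge\|f\|/\|f\|_{A_1(\D)}$. Set $a_m:=\|f_m\|$, $B_0:=\|f\|_{A_1(\D)}$, and $B_m:=B_{m-1}+c_m$, so that $\|f_m\|_{A_1(\D)}\le B_m$ for every $m$. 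Combining the standard smoothness estimate $\|f_{m-1}-c_m\ff_m\|\le \|f_{m-1}\|-c_mF_{f_{m-1}}(\ff_m)+2\|f_{m-1}\|\rho(c_m/\|f_{m-1}\|)$ with the step-size condition (\ref{3.2}) and the weak greedy condition (\ref{3.1}) produces the one-step estimate $a_m\le a_{m-1}-(1-b)tc_mr_\D(f_{m-1})$, and solving (\ref{3.2}) for $c_m$ with $\mu(u)=\ga u^q$ gives $c_m=(tb/(2\ga))^{p-1}a_{m-1}r_\D(f_{m-1})^{p-1}$. Feeding the dual estimate $r_\D(f_{m-1})\ge a_{m-1}/B_{m-1}$ into both of these yields the Banach counterpart of (\ref{aB}):
\[
a_m\le a_{m-1}\bigl(1-ct^p(a_{m-1}/B_{m-1})^p\bigr).
\]

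From this I would extract two bounds on $a_m$ in terms of $B_m$, paralleling the pair (\ref{4.6}) and (\ref{4.11}) in the Hilbert argument. For the first, raise the displayed inequality to the $p$-th power and use the elementary fact $(1-x)^p\le 1-x$ (valid for $p\ge 1$ and $x\in[0,1]$) to obtain $a_m^p\le a_{m-1}^p(1-ct^p(a_{m-1}/B_{m-1})^p)$; dividing by $B_m^p$, using $B_m\ge B_{m-1}$, and setting $x_m:=a_m^p/B_m^p$, Lemma \ref{HL1} with $C_1=1$ and $C_2=ct^p$ delivers $a_m^p/B_m^p\le(1+mct^p)^{-1}$. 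For the second, go back to the intermediate form $a_m\le a_{m-1}(1-(1-b)tc_m/B_{m-1})$ and exploit $B_m=B_{m-1}(1+c_m/B_{m-1})$ together with the inequality $(1-x)(1+x/a)^a\le 1$ (applied at $a=(1-b)t$, $x=(1-b)tc_m/B_{m-1}$) to conclude that $a_mB_m^{(1-b)t}$ is nonincreasing in $m$, hence bounded by $\|f\|\|f\|_{A_1(\D)}^{(1-b)t}$.

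Multiplying the first bound by the appropriate power of the second eliminates $B_m$ and produces the endpoint mixed-norm estimate at $\al_0:=t(1-b)/(1+t(1-b))$; values $\al\in(0,\al_0)$ are then reached via the interpolation Lemma \ref{HL2}, whose proof uses only the monotonicity $\|f_m\|\le\|f\|$, which holds for the DGA because the one-step estimate forces $a_m\le a_{m-1}$. The starred variant DGA$(\tau,b,\mu)^*$ is handled identically: step 2) is replaced by (\ref{3.2a}), after which the smoothness inequality gives $\|f_m\|\le\|f_{m-1}\|-(1-b)c_mF_{f_{m-1}}(\ff_m)\le \|f_{m-1}\|-(1-b)tc_mr_\D(f_{m-1})$, and the rest of the argument carries over unchanged, as already noted in Remark \ref{BR1} for Theorem \ref{T3.1}. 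The main obstacle is purely bookkeeping: aligning the exponents $p$ and $(1-b)t$ so that the two bounds combine to yield exactly the claimed $\al_0/p$ rate and the natural splitting $\|f\|^{1-\al_0}\|f\|_{A_1(\D)}^{\al_0}$.
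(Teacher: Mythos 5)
Your proposal is correct and follows the same route as the paper's own proof: the one-step estimate $\|f_m\|\le\|f_{m-1}\|-t(1-b)c_mr_\D(f_{m-1})$, the dual bound $r_\D(f_{m-1})\ge\|f_{m-1}\|/B_{m-1}$, the explicit solve for $c_m$, Lemma~\ref{HL1} applied to $(\|f_m\|/B_m)^p$, the monotone quantity $\|f_m\|B_m^{t(1-b)}$, and the interpolation via the analog of Lemma~\ref{HL2}. The only cosmetic deviation is that you combine the two one-step bounds with $(1-x)(1+x/a)^a\le 1$ (as in the Hilbert proof) where the paper uses $(1+x)^\alpha\le 1+\alpha x$; these are interchangeable and yield the identical conclusion.
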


\begin{proof} The proof is identical for both characteristics $\ga_m^{t,b,\mu}(\al,X)$ and $\ga_m^{t,b,\mu}(\al,X)^*$. We carry it out for the $\ga_m^{t,b,\mu}(\al,X)$. From the definition of the modulus of smoothness we have
\be\label{2.11}
\|f_{n-1}-c_n\ff_n\|+\|f_{n-1}+c_n\ff_n\| \le 2\|f_{n-1}\|(1+\rho(c_n/\|f_{n-1}\|)). 
\ee
Using the definition of $\ff_n$:
\be\label{2.12}
F_{f_{n-1}}(\ff_n) \ge tr_\D(f_{n-1}) 
\ee
we get
\be\label{2.13}
\|f_{n-1}+c_n\ff_n\|\ge  F_{f_{n-1}}(f_{n-1}+c_n\ff_n) 
\ee
$$
= \|f_{n-1}\| +c_n F_{f_{n-1}}(\ff_n) \ge \|f_{n-1}\| +c_ntr_\D(f_{n-1}). 
$$
Combining (\ref{2.11}) and (\ref{2.13}) we get
\be\label{2.14}
\|f_n\| = \|f_{n-1}-c_n\ff_n\| \le \|f_{n-1}\|(1+2\rho(c_n/\|f_{n-1}\|)) -c_ntr_\D(f_{n-1}). 
\ee
Using the choice of $c_m$ we get from here
\be\label{3.9}
\|f_m\|\le \|f_{m-1}\| -t(1-b)c_mr_\D(f_{m-1}). 
\ee
Thus we need to estimate from below $c_mr_\D(f_{m-1})$. It is clear that 
\be\label{3.10}
\|f_{m-1}\|_{A_1(\D)} = \|f-\sum_{j=1}^{m-1}c_j\ff_j\|_{A_1(\D)} \le \|f\|_{A_1(\D)} +\sum_{j=1}^{m-1}c_j. 
\ee
Denote $B_n:= \|f\|_{A_1(\D)}+\sum_{j=1}^nc_j$. Then by (\ref{3.10}) we have
$$
\|f_{m-1}\|_{A_1(\D)} \le B_{m-1}.
$$
Next, by Lemma 2.2 from [T3] (see also \cite{VTbook}, p.343, Lemma 6.10) we obtain
\be\label{3.11}
r_\D(f_{m-1}) = \sup_{g\in \D} F_{f_{m-1}}(g) = \sup_{\ff \in A_1(\D)} F_{f_{m-1}}(\ff) 
\ee
$$
\ge \|f_{m-1}\|_{A_1(\D)}^{-1}F_{f_{m-1}}(f_{m-1}) \ge \|f_{m-1}\|/B_{m-1}.
$$
Substituting (\ref{3.11}) into (\ref{3.9}) we get
\be\label{3.12}
\|f_m\| \le \|f_{m-1}\|(1-t(1-b)c_m/B_{m-1}). 
\ee
From the definition of $B_m$ we find
$$
B_m = B_{m-1} +c_m = B_{m-1}(1+c_m/B_{m-1}).
$$
Using the inequality
$$
(1+x)^\alpha \le 1+\alpha x, \quad 0\le \alpha\le 1, \quad x\ge 0,
$$
we obtain
\be\label{3.13}
B_m^{t(1-b)}  \le  B_{m-1}^{t(1-b)}(1+t(1-b)c_m/B_{m-1}). 
\ee
Multiplying (\ref{3.12}) and (\ref{3.13})  we get
\be\label{3.14}
\|f_m\|B_m^{t(1-b)}  \le \|f_{m-1}\| B_{m-1}^{t(1-b)} \le \cdots \le \|f\| \|f\|_{A_1(\D)}^{t(1-b)}. 
\ee
The function $\mu(u)/u = \ga u^{q-1}$ is increasing on $[0,\infty)$. Therefore the $c_m$ from (\ref{3.2}) is greater than or equal to $c_m'$ from (see (\ref{3.11}))
\be\label{3.15}
\ga \|f_{m-1}\|(c_m'/\|f_{m-1}\|)^q = \frac{tb}{2}c_m'\|f_{m-1}\|/B_{m-1}, 
\ee
\be\label{3.16}
c_m' = \left(\frac{tb}{2\ga}\right)^{\frac{1}{q-1}}\frac{\|f_{m-1}\|^{\frac{q}{q-1}}}{B_{m-1}^{\frac{1}{q-1}}}. 
\ee
Using notations
$$
p:=\frac{q}{q-1},\qquad c := (1-b)\left(\frac{b}{2\ga}\right)^{\frac{1}{q-1}},
$$
we get from (\ref{3.9}), (\ref{3.11}), (\ref{3.16})
\be\label{3.17}
\|f_m\|\le \|f_{m-1}\| \left(1-ct^p\frac{\|f_{m-1}\|^p}{B_{m-1}^p}\right). 
\ee
Noting that $B_m\ge B_{m-1}$ we derive from (\ref{3.17}) that
\be\label{3.18}
(\|f_m\|/B_m)^p \le (\|f_{m-1}\|/B_{m-1})^p (1-ct^p(\|f_{m-1}\|/B_{m-1})^p). 
\ee
Taking into account that $\|f\|\le \|f\|_{A_1(\D)}$ we obtain from (\ref{3.18}) by Lemma \ref{HL1} with $C_1=1$, $C_2=ct^p$ 
\be\label{3.19}
(\|f_m\|/B_m)^p \le (1+mc t^p)^{-1}. 
\ee
Combining (\ref{3.14}) and (\ref{3.19}) we get 
$$
\|f_m\|\le \|f\|^{1-\al_0}\|f\|_{A_1(\D)}^{\al_0}(1+mc t^p)^{-\al_0/p}, \quad p:= \frac{q}{q-1},\quad \al_0 := \frac{t(1-b)}{1+t(1-b)}.
$$
This completes the proof of Theorem \ref{BT1} for $\al=\al_0$. The case $\al <\al_0$ follows from 
the case $\al=\al_0$ and the corresponding analog of Lemma \ref{HL2}.
\end{proof} 
 
 Let us discuss an application of Theorem \ref{BT1} in the case of a Hilbert space. It is well known and easy to check that for a Hilbert space $H$ one has
$$
\rho(u) \le (1+u^2)^{1/2}-1\le u^2/2.
$$
Let us figure out how the DGA$(t,b,u^2/2)$ works in a Hilbert space. Consider the $m$th step of it. Let $\ff_m\in\D$ be from (\ref{3.1}) with $t_m=t$ (we assume existence in case $t=1$). Then it is clear that for $\ff_m$ we have
 $$
\<f_{m-1},\ff_m\> \ge t \|f_{m-1}\|r_\D(f_{m-1}) = t\sup_{g\in\D} \<f_{m-1},g\>.
$$
The WGA($t,1$) would use $\ff_m$ with the coefficient $\<f_{m-1},\ff_m\>$ at this step. 
The DGA$(t,b,u^2/2)^*$ alike WGA($t,b$) uses the same $\ff_m$ and only a fraction of $\<f_{m-1},\ff_m\>$:
\be\label{3.22}
c_m = b\|f_{m-1}\|F_{f_{m-1}}(\ff_m) = b\<f_{m-1},\ff_m\>. 
\ee
Thus the choice $b=1$ in (\ref{3.22}) corresponds to the WGA. However, it is clear from the above considerations that our technique, designed for general Banach spaces, does not work in the case $b=1$.  By Theorem \ref{BT1} with $\mu(u)=u^2/2$  the DGA$(t,b,\mu)$ and DGA$(t,b,\mu)^*$ provide the following error estimate
\be\label{3.20}
\|f_m\|\le \|f\|^{1-\al}\|f\|_{A_1(\D)}^\al (1+mc t^2)^{-\al/2}, \quad\al\le  \al_0 := \frac{t(1-b)}{1+t(1-b}.
\ee
Note that the inequality (\ref{3.20}) is similar to the corresponding inequality, which follows from Theorem \ref{T4.2a}, for $\al \le \al_1:=  \frac{t(2-b)}{2+t(2-b)}$. It is easy to check that $\al_0 < \al_1$, which means that Theorem \ref{T4.2a} gives a stronger result than the corresponding corollary of Theorem \ref{BT1}. 
 
{\bf A remark on lower bounds.} In Section \ref{H} we obtained the lower bounds, which are sharp in both 
parameters $m$ and $b$. Clearly, the most important parameter is $m$. Here we obtain the lower bounds in $m$, which apply to any algorithm providing $m$-term approximation after $m$ iterations. 
Recall the definition of the concept of $m$-term approximation with respect to a given dictionary $\D$.  
Given an integer $ m\in\N$, we  
denote by $\Sigma_m(\D)$  the set of all $m$-term approximants with respect to $\D$:
$$
\Sigma_m(\D):= \left\{h\in X \,:\,   h=\sum_{i=1}^m c_ig_i,\quad g_i\in \D,\, c_i\in \R,\, i=1,\dots,m\right\}.
$$

Define for a Banach space $X$
$$
\sigma_m(f,\D)_X := \inf_{h\in\Sigma_m(\D)}\|f-h\|_X
$$
to be the best $m$-term approximation of  $f\in X$   in the $X$-norm  with respect to $\D$.    

 Let $1< q\le 2$. Consider $X=\ell_q$. It is known (\cite{LT}, p.67) that $\ell_q$, $1<q\le 2$, is a uniformly smooth Banach space with modulus of smoothness $\rho(u)$ of power type $q$: $\rho(u)\le \gamma u^q$. Choose $\D:= \cE$ as a symmetrized standard basis $\{\pm e_j\}_{j=1}^\infty$, $e_j := (0,\dots,0,1,0,\dots)$, for $\ell_q$. For a given $m\in \N$ set 
 $$
 f:= \sum_{i=1}^{2m} e_i.
$$
Then the following relations are obvious
$$
\|f\|_{\ell_q} = (2m)^{1/q},\quad \|f\|_{A_1(\cE)} = 2m,\quad \sigma_m(f,\cE)_{\ell_q} = m^{1/q}.
$$
Therefore, for any $\al \in [0,1]$
\be\label{B25}
\frac{\sigma_m(f,\cE)_{\ell_q}}{\|f\|_{\ell_q}^{1-\al}\|f\|_{A_1(\cE)}^\al } \ge \frac{1}{2} m^{-\al/p},\quad p:= \frac{q}{q-1}.
\ee
This means that the upper bounds provided by Theorem \ref{BT1} are sharp in $m$ for any fixed parameters $t$ and $b$. More precisely, for every $q\in (1,2]$ there exist a Banach space $X$ with 
$\rho(u,X) \le \gamma u^q$ and a dictionary $\D\subset X$ such that the following inequality holds
\be\label{B26}
\frac{\sigma_m(f,\D)_{X}}{\|f\|_{X}^{1-\al}\|f\|_{A_1(\D)}^\al } \ge \frac{1}{2} m^{-\al/p},\quad p:= \frac{q}{q-1}.
\ee
Inequality (\ref{B26}) follows directly from (\ref{B25}) with $X=\ell_q$ and $\D=\cE$. 

\begin{Remark}\label{BR2} Inequality (\ref{B26}) gives the lower bound for the best $m$-term approximation. It is known (see \cite{VTbook}, Ch.6) that there are greedy type algorithms, for instance the Weak Chebyshev Greedy Algorithm and the Weak Greedy Algorithm with Free Relaxation with the weakness parameter $t\in (0,1]$, which provide the following rate of convergence for $f\in X$ with $\rho(u,X) \le \gamma u^q$, $1<q\le 2$,
$$
\|f_m\|_X \le C(q,\gamma)(1+mt^p)^{-1/p}\|f\|_{A_1(\D)}, \quad p:=\frac{q}{q-1}.
$$
This means that the upper bound in (\ref{B26}) (in the sense of order) can be realized by a greedy type 
algorithm.
\end{Remark}

Note that for specific 
$X$ and $\D$ the inequality (\ref{B26}) may be improved. We illustrate it on the example of $X=\ell_q$ with $q\in (2,\infty)$ and $\D=\cE$. Without loss of generality we can assume that 
$$
f = \sum_{i=1}^\infty c_ie_i,\quad c_1\ge c_2\ge \cdots \ge 0.
$$
Then
\be\label{B27}
\|f\|_{\ell_q} = \left(\sum_{i=1}^\infty c_i^q\right)^{1/q},\quad \|f\|_{A_1(\cE)} = \|f\|_{\ell_1} = \sum_{i=1}^\infty c_i.
\ee
We now estimate from above 
$$
\sigma_m(f,\cE)_{\ell_q} = \left(\sum_{i=m+1}^\infty c_i^q\right)^{1/q}.
$$
Our monotonicity assumption on $\{c_i\}$ imply
$$
c_m \le m^{-1/q}\|f\|_{\ell_q},\quad c_m \le m^{-1}\|f\|_{\ell_1}
$$
and, therefore, for any $\bt\in [0,1]$
\be\label{B28}
c_m \le m^{-(1/q)(1-\bt)-\bt} \|f\|_{\ell_q}^{1-\bt}\|f\|_{\ell_1}^\bt.
\ee
Setting $\al := \bt(1-1/q) +1/q$ we obtain from here with $p:= \frac{q}{q-1}$
\be\label{B29}
\sigma_m(f,\cE)_{\ell_q} \le c_m^{1-1/q}\|f\|_{\ell_1}^{1/q} \le m^{-\al/p} \|f\|_{\ell_q}^{1-\al}\|f\|_{\ell_1}^\al.
\ee
Therefore, for any $\al \in [0,1]$ we have for all $f\in \ell_q$
\be\label{B30}
\frac{\sigma_m(f,\cE)_{\ell_q}}{\|f\|_{\ell_q}^{1-\al}\|f\|_{A_1(\cE)}^\al } \le  m^{-\al/p},\quad p:= \frac{q}{q-1}.
\ee
Note that it is known (see (\ref{1.3}) above) that the space $\ell_q$ with $q\in [2,\infty)$ has the modulus 
of smoothness of the power type $2$. Thus, Theorem \ref{BT1} gives an analog of (\ref{B30}) with a weaker rate of decay $m^{-\al/2}$ than $m^{-\al/p}$ in (\ref{B30}). 

We briefly discuss another example of the $L_q$ type spaces with $q\in (2,\infty)$. Consider the space $L_q(0,2\pi)$ of real $2\pi$-periodic functions and take $\D=\Tr$ to be the real trigonometric system. 
For a given $m\in \N$ set
$$
f:= \sum_{k=1}^{2m} \cos(2^kx).
$$
Then it is well known that
$$
\|f\|_{L_q} \le C(q)(2m)^{1/2},\quad \|f\|_{A_1(\Tr)} = 2m.
$$
Also,
$$
\sigma_m(f,\Tr)_{L_q} \ge \sigma_m(f,\Tr)_{L_2} \ge Cm^{1/2}.
$$
Therefore, for any $\al \in [0,1]$ we obtain 
\be\label{B31}
\frac{\sigma_m(f,\Tr)_{L_q}}{\|f\|_{L_q}^{1-\al}\|f\|_{A_1(\Tr)}^\al } \ge C'(q) m^{-\al/2}.
 \ee

{\bf Acknowledgements.} This research was supported by the Russian Science Foundation (project No. 23-71-30001)
at the Lomonosov Moscow State University.


\begin{thebibliography}{9999}

\bibitem{DGDS} M. Donahue, L. Gurvits,  C. Darken, E. Sontag,
Rate of convex approximation in non-Hilbert spaces, Constr.
Approx., {\bf 13} (1997),187--220.

\bibitem{DT} R.A. DeVore and V.N. Temlyakov, Some remarks
on Greedy Algorithms, Advances in Comp. Math., {\bf 5} (1996),
 173--187.
 
 \bibitem{LT} J. Lindenstrauss and L. Tzafriri, Classical Banach Spaces I,
 Springer-Verlag,  Berlin,  1977. 

 \bibitem{Liv} E.D. Livshitz, On lower estimates of rate of convergence of greedy algorithms,  Izv. RAN, Ser. Matem., {\bf 73}  (2009), 125--144.


\bibitem{VT111} V.N. Temlyakov, Greedy Expansions in Banach Spaces, Advances in Comput. Math. {\bf 26} (2007), 431--449.

\bibitem{VTbook} V.N. Temlyakov, Greedy approximation, Cambridge University
Press, 2011.

\end{thebibliography}
\end{document}